\newtheorem{lemm}{Lemma}
\newtheorem{theo}{Theorem}
\newcommand{\J}{\mathbf{J}}
\newcommand{\A}{\mathbf{A}}
\newcommand{\PGL}{\mathrm{PGL}}
\newcommand{\Bir}{\mathrm{Bir}}
\newcommand{\Aut}{\mathrm{Aut}}
\newcommand{\p}{\mathbb{P}}
\title{Simple relations in the Cremona group}
\author{J\'er\'emy Blanc}
\email{Jeremy.Blanc@unibas.ch}
\begin{document}
% 20F05,14E07
%\begin{abstract}{We give a simple set of generators and relations for the Cremona group of the plane, over any algebraically closed field $k$.}\end{abstract}
\maketitle
Let $k$ be any fixed algebraically closed field. The \emph{Cremona group} $\Bir(\p^2)$ is the group of birational transformations of the projective plane $\p^2=\p^2_{k}$.

The classical Noether-Castelnuovo Theorem says that $\Bir(\p^2)$ is generated by the group $\Aut(\p^2)\cong\PGL(3,k)$, that we will denote by $\A$, and by the \emph{standard quadratic transformation} \[\sigma\colon (X:Y:Z)\dasharrow (YZ:XZ:XY).\]

For a proof which is valid over any algebraically closed field (in particular in any characteristic), see for example \cite[Chapter V, $\S 5$, Theorem~2, page 100]{bib:Shafa}.

\bigskip

A presentation of $\Bir(\p^2)$ was given in \cite{bib:Giz}. The generators are all the quadratic transformations of the plane (among them, all elements of the form $a_1\sigma a_2$, where $a_1,a_2\in \A$), and the relations are all those of the form $q_1q_2q_3=1$ where $q_i$ is a quadratic map. The proof is quite long and uses many sophisticated tools of algebraic geometry, such as cell complexes associated to rational surfaces.

Another presentation was given in \cite{bib:Isk2} (and announced in \cite{bib:Isk1}). The surface taken here is $\p^1\times \p^1$, and the generators used are the group $\Aut(\p^1\times\p^1)$ and the de Jonqui\`eres group $\J$ of birational maps of $\p^1\times\p^1$ which preserve the first projection (see below). There is only one relation in the amalgamated product of these two groups, which is $(\rho\tau)^3=\sigma$, where $\rho=(x,y)\mapsto (x,x/y)$ and $\tau=(x,y)\mapsto(y,x)$ in local coordinates. The proof is much shorter than the one of \cite{bib:Giz}, and the number of relations is also much smaller, but everything is now on $\Bir(\p^1\times\p^1)$. There is also some gap in the proof (observed by S. Lamy): the author implicitly uses relations of the form $(\rho'\tau)^3=\sigma'$ where $\rho'$ has base-points infinitely near, without proving that they are generated by the first one (a fact not so hard to prove).

\bigskip

In this short note, we give a new presentation of the Cremona group, which are as simple as the one of \cite{bib:Isk2}, but stays on $\p^2$. The proof is also very short, and is in fact strongly inspired from the one of \cite{bib:Isk2}. We take care of infinitely near points, and translate the idea of Iskovskikh from $\p^1\times\p^1$ to $\p^2$, where it becomes simpler. We only use classical tools of plane birational geometry (base-points and blow-ups), as mathematicians of the $XIX^{th}$ century did, and as in \cite{bib:Isk2}.

\bigskip

 The {\it de Jonqui\`eres group}, that we will denote by $\J$, is the subgroup of  $\Bir(\p^2)$  consisting of elements which preserve the pencil of lines passing through $p_1=(1:0:0)$. 
This group can be viewed in local coordinates $x=X/Z$ and $y=Y/Z$ as 
 
 \[\J=\left\{\Big(x,y\Big)\dasharrow \left(\frac{ax+b}{cx+d},\frac{\alpha(x)y+\beta(x)}{\gamma(x)y+\delta(x)}\right)\ \left| \ \small\begin{array}{l} \left(\begin{array}{cc}a&b\\ c& d\end{array}\right)\in \PGL(2,k),\vspace{0.1cm}\\  \left(\begin{array}{cc}\alpha&\beta\\ \gamma& \delta\end{array}\right)\in \PGL(2,k(x))\end{array}\right.\right\}.\]
It is thus naturally isomorphic to $\PGL(2,k(x))\rtimes \PGL(2,k)$, where $\PGL(2,k)=\Aut(\p^1)$ acts on $\PGL(2,k(x))$ via its action on $k(x)=k(\p^1)$.

Since $\sigma\in \J$, the group $\Bir(\p^2)$ is generated by $\A$ and $\J$. The aim of this note is to prove the following result:

\begin{theo}\label{TheTheo}
The Cremona group $\Bir(\p^2)$ is the amalgamated product of  $\A=\Aut(\p^2)$ and $\J$ along their intersection, divided by one relation, which is 
\[\sigma\tau=\tau\sigma,\]

where $\tau\in \A$ is given by $\tau=(X:Y:Z)\mapsto (Y:X:Z)$.
\end{theo}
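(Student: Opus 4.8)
The plan is to realise the stated group as $G=\A *_{\A\cap\J}\J$ modulo the normal closure of the element $\sigma\tau\sigma^{-1}\tau^{-1}$, and to show that the natural map to $\Bir(\p^2)$ is an isomorphism. Since $\sigma$ and $\tau$ genuinely commute in $\Bir(\p^2)$ (a direct computation gives $\sigma\tau=\tau\sigma=(X:Y:Z)\mapsto(XZ:YZ:XY)$), the universal properties of the amalgamated product and of the quotient produce a homomorphism $\pi\colon G\to\Bir(\p^2)$. As $\sigma\in\J$, the Noether--Castelnuovo theorem shows that $\A$ and $\J$ generate $\Bir(\p^2)$, so $\pi$ is surjective. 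The whole content is therefore the \emph{injectivity} of $\pi$: every relation in $\Bir(\p^2)$ between elements of $\A$ and of $\J$ must follow from the relations internal to $\A$, those internal to $\J$, the identifications on $\A\cap\J$, and the single relation $\sigma\tau=\tau\sigma$.

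I would attack injectivity by taking a word $w=g_1\cdots g_n$ whose factors lie alternately in $\A\setminus\J$ and $\J\setminus\A$, reduced in the sense of amalgamated products, and satisfying $\pi(w)=1$; the aim is to show $w$ lies in the normal closure of $\sigma\tau\sigma^{-1}\tau^{-1}$. By the normal form theorem such a reduced word is nontrivial in $\A *_{\A\cap\J}\J$ as soon as $n\ge 2$, so a nontrivial relation in $\Bir(\p^2)$ can only come from the added relation. To organise the reduction I would attach to $w$ the degree sequence of the partial composites $f_i=\pi(g_1\cdots g_i)$, recording the base points — proper \emph{and} infinitely near — of the de Jonqui\`eres factors, and argue by induction on this complexity, seeking a place where it is locally maximal and where a suitable conjugate of $\sigma\tau\sigma^{-1}\tau^{-1}$ applies so as to decrease it strictly without changing $\pi(w)$.

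The geometric core is to control the ambiguity in writing one map as such a product. I would first treat a single quadratic transformation: every one is of the form $a\sigma b$ with $a,b\in\A$, and I would show that two expressions $a_1\sigma b_1=a_2\sigma b_2$ for the same map differ only by elements of $\A\cap\J$ together with the relation $\sigma\tau=\tau\sigma$. This is exactly where $\tau$ is needed: $\tau$ transposes the base points $(1:0:0)$ and $(0:1:0)$ of $\sigma$, while the remaining transpositions of the three base points lie in $\A\cap\J$ and already commute with $\sigma$ \emph{inside} $\J$, so the single relation $\sigma\tau=\tau\sigma$ supplies all the symmetry of $\sigma$ under reordering its base points. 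One then propagates this local statement from quadratic maps to arbitrary elements of $\J$ and to longer words, re-expressing the non-linear behaviour through $\sigma$ by Noether--Castelnuovo.

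The main obstacle — and precisely the point the proof must settle more carefully than \cite{bib:Isk2} — is the treatment of \emph{infinitely near} base points. When the simple base points of a de Jonqui\`eres map collide into infinitely near configurations, one meets relations of the same shape as $\sigma\tau=\tau\sigma$ but with $\sigma$ replaced by a conjugate quadratic map whose base points are infinitely near. I expect the crux of the argument to be the bookkeeping of these degenerate configurations and the verification that each such relation is \emph{already} a consequence of the single relation on $\sigma$, rather than a genuinely new one; this is the gap left open in \cite{bib:Isk2} and the part requiring the most care.
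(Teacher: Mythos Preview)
Your high-level framework is exactly the paper's: surjectivity by Noether--Castelnuovo, injectivity by induction on the degree sequence of the partial composites, with the infinitely near configurations flagged as the delicate point. But two concrete mechanisms are missing from your outline, and one claim is wrong.

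First, the claim ``every quadratic is $a\sigma b$ with $a,b\in\A$'' is false: a quadratic with an infinitely near base point is never of this form, since $b^{-1}$ sends the three proper base points of $\sigma$ to three proper points. So your proposed key lemma (comparing two writings $a_1\sigma b_1=a_2\sigma b_2$) only handles the generic case by design. The paper's key lemma is formulated differently and this matters: one fixes a quadratic $\theta\in\J$ with base points $p_1$ and a second \emph{proper} point $q$, takes $\nu\in\A$ exchanging $p_1$ and $q$, and shows $\nu\theta\nu^{-1}\in\J$ modulo the relation. The infinitely near case is then dispatched not by a limiting argument but by an explicit factorisation \emph{inside} $\J$: the two degenerate quadratic involutions $\nu_1,\nu_2$ (third base point infinitely near $p_1$ or $p_2$) satisfy $\nu_i=\rho_i\sigma\rho_i\sigma\rho_i$ for suitable affine $\rho_i\in\A\cap\J$, and one checks $\tau\rho_i\tau^{-1}=\rho_{3-i}$ in $\A$. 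This one identity is what reduces the infinitely near relations to $\sigma\tau=\tau\sigma$; your proposal identifies this as ``the crux'' but does not supply it.

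Second, your description of the inductive step (``seek a locally maximal place and apply a conjugate of the commutator'') hides the actual content. At the peak index $n$ the two neighbouring de Jonqui\`eres factors have principal base points $r_0=p_1$ and $l_0=a_{n+1}^{-1}(p_1)\neq p_1$; a short multiplicity count on $\Lambda_n$ produces a third point $q$ with $m(l_0)+m(r_0)+m(q)>d_n$, hence a quadratic $\theta\in\J$ through $r_0,l_0,q$ with $\deg\theta(\Lambda_n)<d_n$. The lemma above then lets you slide $\theta$ across $a_{n+1}$, rewriting $j_{n+1}a_{n+1}j_n$ as $(j_{n+1}(\theta')^{-1})\,a_{n+1}\,(\theta j_n)$ and strictly decreasing the complexity pair $(D,k)$. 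There is also a ``bad case'' where the principal base point does not carry the largest multiplicity; one must first perform an auxiliary swap (again via the lemma) before the reduction applies. None of this is visible in your outline, and without it the induction does not close.
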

Since $\sigma\tau=\tau\sigma$ is easy to verify, it suffices to prove that no other relation holds. We prove this after proving the following simple lemma.
\begin{lemm}\label{Lem:RelGen}
If $\theta\in \J$ is a quadratic map having $p_1=(1:0:0)$ and $q$ as base-points, where $q$ is a proper point of $\p^2\backslash\{p_1\}$, and $\nu\in \A$ exchanges $p_1$ and $q$,  the map $\theta'=\nu \theta\nu^{-1}$ belongs to $\J$ and the relation \[\nu \theta^{-1}=(\theta')^{-1}\nu\]
is generated by the relation $\sigma \tau =\tau\sigma$ in the amalgamated product of $\A$ and $\J$.
\end{lemm}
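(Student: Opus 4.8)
\section*{Proof proposal}

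The plan is to treat the two assertions separately: the membership $\theta'\in\J$ geometrically, and the relation combinatorially, by exhibiting it as a single conjugate of $\sigma\tau=\tau\sigma$ after a normalisation. For the membership, observe that $\theta'=\nu\theta\nu^{-1}$ is again a quadratic map whose base-points are the images under $\nu$ of those of $\theta$, namely $\nu(p_1)=q$, $\nu(q)=p_1$ and $\nu(r)$ (with $r$ the third base-point of $\theta$); in particular $p_1$ is a base-point of $\theta'$. To see that the pencil of lines through $p_1$ is preserved, take a general line $\ell$ through $p_1$: then $\nu^{-1}(\ell)$ is a line through $\nu^{-1}(p_1)=q$, its image $\theta(\nu^{-1}(\ell))$ is again a line through $q$ — here one uses that the pencil through $q$ is also preserved by $\theta$, i.e. that $q$ is a base-point of $\theta^{-1}$ as well — and finally $\nu$ sends this to a line through $\nu(q)=p_1$. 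Hence $\theta'$ preserves the pencil through $p_1$ and lies in $\J$.

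For the relation, I would stress first that $\nu\theta^{-1}=(\theta')^{-1}\nu$ holds automatically in $\Bir(\p^2)$, since $(\theta')^{-1}\nu=\nu\theta^{-1}\nu^{-1}\nu=\nu\theta^{-1}$; the content is to produce this identity inside the amalgamated product using only $\sigma\tau=\tau\sigma$, for there $\nu\theta^{-1}$ and $(\theta')^{-1}\nu$ are genuinely distinct reduced words, of syllable-types $\A\J$ and $\J\A$ respectively, so a nontrivial relation is needed. I would therefore normalise: choosing $g\in\A\cap\J$ with $g(q)=p_2$ and conjugating the whole desired identity by $g$ — an operation preserving the property of being generated by $\sigma\tau=\tau\sigma$ — reduces to the case $q=p_2$. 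Then $\nu$ exchanges $p_1$ and $p_2$, so $\nu=c\tau$ with $c=\nu\tau$ fixing $p_1$ and $p_2$, whence $c\in\A\cap\J$; this lets me replace $\nu$ by $\tau$ modulo factors in $\A\cap\J$.

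Next I would use the factorisations $\theta=a\sigma b$ and $\theta'=a'\sigma b'$ with $a,b,a',b'\in\A\cap\J$, obtained by choosing $b$ (resp. $b'$) in $\A\cap\J$ carrying the base-points of $\theta$ (resp. $\theta'$) onto $p_1,p_2,p_3$ and putting $a=\theta(\sigma b)^{-1}$ (resp. $a'=\theta'(\sigma b')^{-1}$), which are then linear and fix $p_1$. Substituting $\theta^{-1}=b^{-1}\sigma a^{-1}$ and $(\theta')^{-1}=b'^{-1}\sigma a'^{-1}$ into the identity and transposing all linear factors to the outside, the target becomes a relation of the shape $P\sigma Q=\sigma$ with $P,Q\in\A$, where $P$ and $Q$ are built from $\tau$ and the factors $b,b',c$ lying in $\A\cap\J$. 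The geometric data force $P,Q$ into the normaliser of $\sigma$ (generated by the diagonal torus and the coordinate permutations), with permutation part exactly the transposition $\tau$; the concluding step is to recognise $P\sigma Q=\sigma$ as $\tau\sigma\tau^{-1}=\sigma$ after absorbing the $\A\cap\J$-factors by relations internal to $\A$ and to $\J$.

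The main obstacle I anticipate is precisely this last absorption. One must show that the residual linear relation $P\sigma Q=\sigma$ follows from the single relation $\tau\sigma=\sigma\tau$, and not from the analogous relations attached to the other transpositions or to the diagonal torus; this is where the normalisation $q=p_2$ (making the relevant permutation literally $\tau$) and the freedom in the factorisations $\theta=a\sigma b$, $\theta'=a'\sigma b'$ must be exploited to pin $P$ down to $\tau$ modulo $\A\cap\J$. A secondary point to monitor is the treatment of infinitely near base-points in the factorisation step, which is exactly why the hypothesis that $q$ is a proper point enters.
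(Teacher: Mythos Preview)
Your normalisation to $q=p_2$ and then to $\nu=\tau$ is correct and matches the paper. However, the proposal has a genuine gap and, in the case it does cover, an unnecessary detour.

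\textbf{The gap: infinitely near third base-point.} You write $\theta=a\sigma b$ with $b\in\A\cap\J$ carrying the base-points of $\theta$ to $p_1,p_2,p_3$. This only works when $\theta$ has three \emph{proper} base-points. The hypothesis that $q$ is a proper point constrains only the \emph{second} base-point; the third one may perfectly well be infinitely near to $p_1$ or to $q=p_2$, and then no linear $b$ exists. The paper treats this case separately and nontrivially: one writes $\theta=a_1\nu_i a_2$ with $a_1,a_2\in\A\cap\J$, where $\nu_1,\nu_2$ are the two standard quadratic involutions with one infinitely near base-point, and then uses the identity $\nu_i=\rho_i\sigma\rho_i\sigma\rho_i$ inside $\J$ (with $\rho_1,\rho_2\in\A\cap\J$ explicit affine involutions). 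Conjugating by $\tau$ and applying $\tau\sigma\tau^{-1}=\sigma$ twice, together with the linear fact $\tau\rho_i\tau^{-1}=\rho_{3-i}$ in $\A$, yields $\tau\nu_i\tau^{-1}=\nu_{3-i}$ modulo the relation, and hence $\tau\theta\tau^{-1}\in\J$. This decomposition of $\nu_i$ as a word in $\sigma$ and $\A\cap\J$ is the missing idea; without it the argument does not close.

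\textbf{The detour in the three-proper-points case.} Your reduction to $P\sigma Q=\sigma$ with $P,Q$ monomial and ``permutation part $\tau$'' is a real obstacle, as you suspected: you would need to know that the torus relations $d\sigma d'=\sigma$ (for $d,d'$ diagonal) and the relations for the transposition $(p_2\ p_3)$ already hold in the amalgamated product. The paper avoids this entirely. Since $\theta$ has $p_1$ and $p_2$ among its base-points, one may choose $a_1,a_2\in\A\cap\J$ fixing \emph{both} $p_1$ and $p_2$ (using that the swap $(X:Z:Y)\in\A\cap\J$ commutes with $\sigma$ inside $\J$). Then $\tau a_i\tau^{-1}$ again fixes $p_1$ and $p_2$, hence lies in $\A\cap\J$, and
\[
\tau\theta\tau^{-1}=(\tau a_1\tau^{-1})(\tau\sigma\tau^{-1})(\tau a_2\tau^{-1})
=(\tau a_1\tau^{-1})\,\sigma\,(\tau a_2\tau^{-1})
\]
is a product of three elements of $\J$, hence a single element of $\J$ in the amalgamated product, after one use of $\sigma\tau=\tau\sigma$. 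No normaliser analysis is needed.
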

\begin{proof}[Proof of Lemma~$\ref{Lem:RelGen}$]
The relations $\theta'=\nu \theta\nu^{-1}$ and $\nu \theta^{-1}=(\theta')^{-1}\nu$ are clearly equivalent. In particular, the result is invariant under conjugation of both $\theta$ and $\nu$ by an element of $\A\cap \J$. Choosing an element in $\A\cap \J$ which sends $q$ onto $p_2=(0:1:0)$, we can assume that $q=p_2$. Then $\nu$ is equal to $a\tau$, where $\tau=(X:Y:Z)\mapsto (Y:X:Z)$ and  $a$ is an element of $\A\cap \J$ which fixes $p_2$. We can thus assume that $\nu=\tau$. We study two cases separately, depending on the number of proper base-points of $\theta$.

$(a)$ Suppose that $\theta$ has exactly three proper base-points, which means that $\theta=a_1 \sigma a_2$ for some $a_1,a_2\in \A\cap\J$. This yields the following equality in the amalgamated product:
\[\tau \theta \tau^{-1}=\tau a_1 \sigma a_2\tau^{-1}=(\tau a_1 \tau^{-1})(\tau \sigma\tau^{-1})(\tau a_2\tau^{-1}).\]
This implies that $\tau \theta\tau^{-1}$ is equal to an element of $\J$ modulo the relation $\sigma \tau =\tau\sigma$, and yields the result.

$(b)$ Suppose now that $\theta$ has only two proper base-points, $p_1$, $p_2$, and that its third base-point, is infinitely  near to $p_i$ for some $i\in\{1,2\}$. This means that $\theta=a_1 \nu_i a_2$ for some $a_1,a_2\in \A\cap\J$, where $\nu_1$, $\nu_2$ are the following quadratic involutions:

\[\begin{array}{rcl}
\nu_1\colon& (X:Y:Z)\dasharrow &(XY:Z^2:YZ),\\
\nu_2\colon& (X:Y:Z)\dasharrow &(Z^2:XY:XZ).\end{array}\]

Denoting by $\rho_1,\rho_2\in \A\cap \J$ the maps

\[\begin{array}{rcl}
\rho_1\colon& (X:Y:Z)\dasharrow &(X:Z-Y:Z),\\
\rho_2\colon& (X:Y:Z)\dasharrow &(Z-X:Y:Z),\end{array}\]

we have $\nu_i=\rho_i\sigma\rho_i\sigma\rho_i$ in $\J$. As above, this yields the following equality:
\[\tau \theta \tau^{-1}=(\tau a_1 \tau^{-1})(\tau \rho_i\tau^{-1})(\tau\sigma\tau^{-1})(\tau\rho_i\tau^{-1})(\tau\sigma\tau^{-1})(\tau\rho_i\tau^{-1})(\tau a_2\tau^{-1}).\]

Using $\sigma \tau =\tau\sigma$ and the fact that $\tau\rho_i\tau^{-1}=\rho_{j}$ in $\A$, with $j=3-i$, we obtain 
\[\tau \sigma \tau^{-1}=(\tau a_1 \tau^{-1})(\rho_j\sigma\rho_j\sigma\rho_j)(\tau a_2\tau^{-1})=(\tau a_1 \tau^{-1})\nu_j(\tau a_2\tau^{-1}).\]

So  $\tau \theta\tau^{-1}$ is again equal to an element of $\J$ modulo the relation $\sigma \tau =\tau\sigma$. \end{proof}
\begin{proof}[Proof of Theorem~$\ref{TheTheo}$]
Taking an element $f$ in the amalgamated product $\A\star_{\A\cap \J} \J$ which corresponds to the identity map of $\Bir(\p^2)$, we have to prove that $f$ is the identity in the amalgamated product, modulo the relation $\sigma\tau=\tau\sigma$.

We write $f=j_r a_r\dots j_1a_1$ where $a_i\in \A$, $j_i\in \J$ for $i=1,\dots,n$ (maybe trivial).

We denote by $\Lambda_0$ the linear system of  lines of the plane and for $i=1,\dots,n$, we denote by $\Lambda_i$ the linear system $j_ia_i\dots j_1 a_1(\Lambda_0)$, and by $d_i$ its degree.
We define $$D=\max\Big\{d_i\Big|\ i=1,\dots,r\Big\}, n=\max\Big\{i\ \Big| d_i=D\Big\} \mbox{ and }k=\sum_{i=1}^m \Big(\deg (j_i)-1\Big).$$ When $D=1$, each $j_i$ belongs to $\A$, and the word is equal  to an element of $\A$ in the amalgamated product; since $\A$ embeds into $\Bir(\p^2)$, this case is clear. We can thus assume that $D>1$ and prove the result by induction on the pairs $(D,k)$, ordered lexicographically. 

If $j_n$ belongs to $\A$, we replace $a_{n+1}j_n a_n$ by its product in $\A$; this does not change the pair $(D,k)$ but decreases $n$ by $1$. If $j_{n+1}$ belongs to $\A$, a similar replacement decreases $r$ by $1$ without changing the pair $(D,k)$. We can thus assume that $j_n,j_{n+1}\in \J\backslash \A$ and that $a_{n+1}\in \A\backslash \J$, which means that $a_{n+1}(p_1)\not=p_1$  (recall that $p_1=(1:0:0)$ is the base-point of the pencil associated to $\J$).

%We write $q_1=a_{n+1}(p_1)\not=p_1$ and $q_2=(a_{n+1})^{-1}(p_1)\not= p_1$ (recall that $p_1=(1:0:0)$ is the base-point of the pencil associated to $\J$). Taking elements  $\alpha_1,\alpha_2\in \A\cap \J$  which send respectively $q_1$ onto $p_2=(0:1:0)$ and $p_2$ onto $q_2$ and replacing respectively  $a_{n+1}$, $j_n$ and  $j_{n+1}$ by  $\alpha_1 a_{n+1}\alpha_2$, $(\alpha_2)^{-1} j_n$ and $j_{n+1}(\alpha_1)^{-1}$ we can assume that $q_1=q_2=p_2$, so that $a_{n+1}$ exchanges $p_1$ and $p_2$. Doing the same kind of trick, we can moreover assume that $a_{n+1}$ is the map $\tau\colon (x:y:z)\mapsto (y:x:z)$.

The system $\Lambda_{n+1}=j_{n+1}a_{n+1}(\Lambda_n)$ has degree $d_{n+1}< d_n=D$, and $\Lambda_{n-1}=(a_n)^{-1}(j_{n})^{-1}(\Lambda_{n})$ has degree $d_{n-1}\le d_n$.
The maps $j_{n+1},j_n\in J\backslash A$ have degree $D_R$ and $D_L$ respectively, for some integers $D_R,D_L\ge 2$. The points $l_0=(a_{n+1})^{-1}(p_1)\not=p_1$ and $r_0=p_1$ are base-points of respectively $j_{n+1}a_{n+1}$ and $(a_n)^{-1}(j_{n})^{-1}$ of multiplicity $D_L-1$ and $D_R-1$. Writing $l_1,\dots,l_{2D_L-2}$ and $r_1,\dots,r_{2D_R-2}$ the other base-points of these two maps, the linear systems $\Lambda_{n+1}$ and $\Lambda_{n-1}$ have respectively degree 
\[\begin{array}{rclcl}
d_{n+1}&=&D_L\cdot d_n-(D_L-1)\cdot m(l_0)-\sum_{i}^{2D_L-2} m(l_i)&>&d_n,\\
d_{n-1}&=&D_R\cdot d_n-(D_R-1)\cdot m(r_0)-\sum_{i}^{2L_R-2}m(r_i)&\ge & d_n,\end{array}\] where $m(q)\ge 0$ is the multiplicity of a point $q$ as a base-point of $\Lambda_n$.
We order the points $l_1,\dots,l_{2D_L-2}$ so that $m(l_i)\ge m(l_{i+1})$  for each $i\ge 1$ and that if $l_i$ is infinitely near to $l_j$ then $i>j$, and we do the same for $r_1,\dots,r_{2D_R-2}$. With this order and the above inequalities, we find
\begin{equation}\label{EqIn}
\begin{array}{rcl}
 m(l_0)+ m(l_1)+m(l_2)&>&d_{n},\\
m(r_0)+m(r_1)+m(r_2)&\ge& d_{n}.\end{array}\end{equation}

$(a)$ Suppose that $m(l_0)\ge m(l_1)$ and $m(r_0)\ge m(r_1)$. We choose a point $q$ in the set $\{l_1,l_2,r_1,r_2\}\backslash \{l_0,r_0\}$ with the maximal multiplicity $m(q)$, and so that $q$ is a proper point of the plane or infinitely near to $l_0$ or $r_0$ (which are distinct proper points of the plane). We now prove that 
\begin{equation}\label{EqInn2}m(l_0)+m(r_0)+m(q)>d_n.\end{equation} 
If $l_1=r_0$, $m(q)\ge m(l_2)$ and  $m(l_0)+m(r_0)+m(q)\ge  m(l_0)+ m(l_1)+m(l_2)>d_{n}$ by $(\ref{EqIn})$. If $l_1\not=r_0$, $m(q)\ge m(l_1)\ge m(l_2)$ so $m(l_0)+m(q)>2d_n/3$. Since  $m(r_0)\ge m(r_1)\ge m(r_2)$, we have $m(r_0)\ge d_n/3$, and the inequality $(\ref{EqInn2})$ is clear. 

Because of Inequality~$(\ref{EqInn2})$, the points $l_0,r_0$ and $q$ are not aligned, and there exists a quadratic map $\theta\in \J$ with base-points $l_0,r_0,q$ (recall that $r_0=p_1$ is the point associated to the pencil of $\J$). 
Moreover, the degree of $\theta(\Lambda_n)$ is $2d_n-m(l_0)-m(r_0)-m(q)<d_n$. Recall that $a_{n+1}\in \A$ sends $l_0$ onto $r_0=p_1$. Choosing $\nu \in \A\cap \J$ which sends $a_{n+1}(r_0)$ onto $l_0$ and replacing respectively $a_{n+1}$ and $j_{n+1}$ by $\nu a_{n+1}$ and $j_{n+1}\nu^{-1}$, we can assume that $a_{n+1}$ exchanges $l_0$ and $r_0$ .   Using Lemma~\ref{Lem:RelGen}, we write $\theta'=a_{n+1}\theta (a_{n+1})^{-1}\in \J$ and obtain the following equality modulo the relation $\sigma \tau =\tau\sigma$:

\[j_{n+1}a_{n+1}j_n=j_{n+1} a_{n+1}\theta^{-1} (\theta j_n)=(j_{n+1}(\theta')^{-1})a_{n+1}(\theta j_n),\]

and both $(j_{n+1}(\theta')^{-1})$ and $(\theta j_n)$ belong to $\J$, but $a_{n+1}\in \A$.
Since $\theta(\Lambda_n)$ has degree $<d_n$, this rewriting decreases the pair $(D,k)$.

$(b)$ Suppose now that we are in a "bad case" where $m(l_0)<m(l_1)$ or $m(r_0)<m(r_1)$. We now prove that it is possible to change the writing of $f$ in the amalgamated product (modulo the relation) without changing $(D,k)$ but reversing the inequalities; we will thus be able to go back to the "good case" already studied in $(a)$ to conclude.

Assume  first that $m(r_1)>m(r_0)$. This implies that $r_1$ is a proper point of the plane, and that there exists a quadratic map $\theta\in \J$ with base-points $p_1=r_0,r_1,r_2$. Since these three points are base-points of $(j_{n})^{-1}$, the degree of $\theta j_n\in \J$ is equal to the degree of $j_n\in \J$ minus $1$.

 Taking $\nu \in \A$ which exchanges $r_0$ and $r_1$, and applying Lemma~\ref{Lem:RelGen} 
 we write $\theta'=\nu\theta \nu^{-1}\in \J$ and obtain the following equality modulo the relation $\sigma \tau =\tau\sigma$:

\[a_{n+1}j_n= (a_{n+1}\nu^{-1})\nu\theta^{-1} (\theta j_n)=(a_{n+1}\nu^{-1})(\theta')^{-1}\nu (\theta j_n),\]

and both $\theta'$ and $(\theta j_n)$ belong to $\J$, but $(a_{n+1}\nu^{-1})$ and $\nu$ belong to $\A$.
This rewriting replaces 

\[\begin{array}{rcl}
(j_1,\dots,j_{n-1},j_n,j_{n+1},\dots, j_r)&\mbox{with} & (j_1,\dots,j_{n-1},\theta j_n, (\theta')^{-1},j_{n+1},\dots,j_r),\\
(\Lambda_0,\dots,\Lambda_{n-1},\Lambda_{n},\Lambda_{n+1},\dots,\Lambda_r)&\mbox{with}&(\Lambda_0,\dots,\Lambda_{n-1},\theta(\Lambda_{n}),\nu(\Lambda_{n}) ,\Lambda_{n+1},\dots,\Lambda_r).\end{array}\]
 The degree of $\theta(\Lambda_{n})$ is equal to $2d_n- m(r_0)-m(r_1)-m(r_2)\le d_n$, and the degree of $\nu(\Lambda_n)$ is $d_n$. The new sequence has thus the same $D$, $n$ is replaced with $n+1$, and $k$ stays the same since $\deg((\theta')^{-1})-1+\deg(\theta j_n)-1=2-1+\deg(\theta j_n)-1=\deg(j_n)-1$. The system $\Lambda_n$ being replaced with $\nu(\Lambda_n)$, where $\nu\in \A$ exchanges $r_0$ and $r_1$, the multiplicity of $r_0$ as a base-point of $\nu(\Lambda_n)$ is now the biggest among the base-points of $\theta'$. In the new sequence, we have $m(r_0)>m(r_1)$ instead of $m(r_1)>m(r_0)$.

If $m(l_1)>m(l_0)$,  the same kind of replacement exchanges the points $l_0$ and $l_1$. 

We can thus go back to case $(a)$ after having made one or two replacements. This achieves the proof.
\end{proof}

\end{document}